\DeclareMathOperator*{\argmin}{argmin}
\DeclareMathOperator{\dir}{dir}
\DeclareMathOperator{\prj}{P}
\newcommand{\bhv}{\mathbb{T}_N}
\newcommand{\ort}{\mathbb{S}}
\newcommand{\perpdir}{\left(\Sigma_{\ort} \bhv\right)^\perp}
\newcommand{\perpdirx}{\left(\Sigma_{x} \bhv\right)^\perp}
\newcommand{\Prb}{{\mathbb P}}
\newcommand{\Prbalt}{{\mathbb Q}}
\newcommand{\NN}{{\mathbb N}}
\newcommand{\RR}{{\mathbb R}}
\newcommand{\mean}{\mu}
\newcommand{\meanalt}{\nu}
\newcommand{\diff}{\mathrm{d}}
\newcommand{\wst}[2][1]{\mathcal{P}^#1\left(#2\right)}
\newcommand{\iid}{{\stackrel{i.i.d.}{\sim}}}
\begin{document}
\title{
Types of Stickiness in BHV Phylogenetic Tree Spaces and Their Degree%
\thanks{Supported by DFG GK 2088 and DFG HU 1575/7.}}
%
%
\author{Lars Lammers \inst{1} \and
Do Tran Van\inst{1}\and Tom M. W. Nye\inst{2} \and
Stephan F. Huckemann\inst{1}}
\authorrunning{L. Lammers et al.}
%
\institute{$^1$Felix-Bernstein-Institute for Mathematical Statistics 
in the Biosciences, University of G\"ottingen, 
Goldschmidtstrasse 7, 37077 G\"ottingen, Germany\\
$^2$School of Mathematics, Statistics and Physics, Newcastle University,\\ Newcastle upon Tyne, UK
\\
lars.lammers@uni-goettingen.de, 
do.tranvan@uni-goettingen.de, tom.nye@ncl.ac.uk
\\
stephan.huckemann@mathematik.uni-goettingen.de
}
\maketitle              
\begin{abstract}
It has been observed that the sample mean of certain probability distributions in Billera-Holmes-Vogtmann (BHV) phylogenetic spaces is confined to a lower-dimensional subspace for  large enough sample size. This non-standard behavior has been called stickiness and poses difficulties in statistical applications when comparing samples of sticky distributions. We extend previous results on stickiness to show the equivalence of this sampling behavior to topological conditions in the special case of BHV spaces. Furthermore, we propose to alleviate statistical comparision of sticky distributions by including the directional derivatives of the Fr\'echet function: the degree of stickiness.

\keywords{Fr\'echet mean  \and Hadamard spaces \and Wasserstein distance \and statisical discrimination.}
\end{abstract}

\section{Introduction}

The Billera Holmes Vogtmann (BHV) spaces, first introduced in \cite{bhv}, are a class of metric spaces whose elements are trees describing potential evolutionary relations between species.  Allowing for statistical analysis of samples of entire phylogenies, they have gained considerable attraction in recent years. They are particularly attractive from a mathematical point of view as they were shown to be Hadamard spaces \cite[Lemma 4.1]{bhv}, i.e. complete metric spaces of global non-positive curvature. This results in many convexity properties of the metric guaranteeing, e.g. unique (up to reparametrization) geodesics between any two points and the existence and uniqueness of Fr\'echet means (see (\ref{eq:Frechetfcn}) below) for distributions with finite first moment \cite{sturm}. The Fr\'echet mean is a natural generalization of the expectation to any metric space $(M,d)$ as the minimizer of the expected squared distance of a probability distribution $\Prb \in \mathcal{P}^1(M)$. Here $\mathcal{P}(M)$ denotes the family of all Borel probability distributions on $M$ and
\begin{align*}
    \wst{M} = \Bigg \{ \Prb \in \mathcal{P}(M) \ \Big \vert \ \forall x \in M : 
        \int_M d(x,y) \ \diff \Prb(x) < \infty \Bigg \}.
\end{align*}
For a distribution $\Prb \in \wst{M}$, the Fr\'echet mean $b(\Prb)$ is then the set 
of minimizers of the \emph{Fr\'echet function}
\begin{align}\label{eq:Frechetfcn}
    F_\Prb(x) = \frac{1}{2} \int_M \left( d^2(x,y) - d^2(z,y)\right) \diff \Prb(y) 
        \quad x \in M,
\end{align}
for arbitrary $z \in M$. In Hadamard spaces, while the Fr\'echet function is strictly convex, 
in certain spaces, sampling from some distributions leads to degenerate behavior of the sample Fr\'echet mean, where, after a finite random sample size, it is restricted to a lower dimensional subsets of the space. This phenomenon has been called stickiness and was studied for various spaces, including BHV spaces, see \cite{bhvclt2,barden18,openbook,kale}. This absence of asymptotic residual variance or its reduction  incapacitates or aggravates standard statistical methodology. 

In \cite{kale}, a topological notion of stickiness was proposed: 
given a certain topology on a set of probability spaces, a distribution \emph{sticks} to $S \subset M$ if all distributions in a sufficiently small neighborhood have their Fr\'echet means  in $S$. There, it was also shown for the so-called \emph{kale} that \emph{sample stickiness} is equivalent to \emph{topological stickiness}, induced by equipping $\wst{M}$ with the Wasserstein distance
\begin{align*}
    W_1(\Prb, \Prbalt) = \argmin_{\pi \in \Pi(\Prb, \Prbalt)}   
        \int_{M \times M} d(x,y) \diff \pi(x,y),
\end{align*}
where $\Pi(\Prb, \Prbalt)$ denotes the set of all couplings of $\Prb, \Prbalt 
\in \wst{M}$.

In this paper, we provide this equivalence of both notions of stickiness for strata of BHV spaces with positive codimension by using directional derivatives of the Fr\'echet function. Furthermore, we propose using these directional derivatives as a tool to discriminate between sticky distributions whose means are indistinguishable.

\section{The Billera-Holmes-Vogtmann Phylogenetic Tree Space}

For $N \in \mathbb{N}$ and $N \geq 3$, the BHV tree space $\bhv$ represents rooted trees with $N$ labelled leaves via positive lengths of \emph{interior edges}. Here, an interior edge is a split of the set of leaves and the root with at least two elements in both parts. The set of splits of a tree determines its \emph{topology}. Whenever new internal nodes appear or existing ones coalesce, the topology changes. 
\begin{definition}
    Trees with common topology form a  \emph{stratum $\ort \subset \bhv$}. We say the stratum is of codimension $l \geq 0$
    if the topology features $N-2-l$ splits.
\end{definition}
The highest possible stratum dimension is $N-2$, which happen in the topology of a binary, i.e. of a fully resolved tree.

 Taking the Euclidean geometry within closed strata and gluing them together at their boundaries, \cite{bhv} arrive at the  separable Hadamard space $(\bhv,d))$.  Thus, geodesics between two trees in a BHV tree space correspond to changing the length of splits present in both trees and the addition and removal of the other splits. Geodesics between two trees can be computed in polynomial time 
\cite{gtp}.
 
 For $x\in \bhv$ let $B_\epsilon(x)$ be the open ball of radius $\epsilon>0$ in $\bhv$. Then, in direct consequence of the construction, we have the following.
 
\begin{lemma}\label{lem:ball_in_bhv}
    Let $\ort \subset \bhv$ be any stratum $l \geq 1$. Then, for any $x \in 
    \ort$, there is $\epsilon > 0$ such that $\overline{B_\epsilon(x)} 
    \cap \ort$ is closed in $\ort$. Furthermore, the topology of any $y \in 
    \overline{B_\epsilon(x)}$ features all splits present in $x$.
\end{lemma}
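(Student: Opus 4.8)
The plan is to reduce both assertions to a single quantitative fact: to deform $x$ into a tree that no longer carries one of its splits, one must travel at least the length of that edge. Write the splits of $x$ as $e_1,\dots,e_k$ with $k=N-2-l$, let $\ell_i(x)>0$ denote the length of $e_i$ in $x$, and set $m=\min_i \ell_i(x)>0$. I would then simply fix $\epsilon=m/2$ (any $\epsilon<m$ works) and check that this choice makes both statements hold. Throughout I use that, by the BHV construction, each closed stratum $\overline{\ort}$ is isometric to a Euclidean orthant $[0,\infty)^k$ whose coordinates are the edge lengths, and that distinct orthants are glued isometrically along the faces on which the corresponding edge lengths vanish.

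First I would establish the second (and main) assertion. For a split $e$ of $x$, define $\pi_e\colon\bhv\to[0,\infty)$ by letting $\pi_e(y)$ be the length of $e$ in $y$ if $y$ carries $e$, and $\pi_e(y)=0$ otherwise. Within any single closed orthant this is a coordinate projection, hence $1$-Lipschitz, and it is continuous across the gluing faces because it vanishes exactly where $e$ is absent. Since a geodesic of $\bhv$ is a concatenation of straight segments through finitely many orthants, the total variation of $\pi_e$ along it is bounded, segment by segment, by the segment length, so $\pi_e$ is globally $1$-Lipschitz. Consequently, for any $y$ and any split $e_i$ of $x$, $|\ell_i(x)-\pi_{e_i}(y)|=|\pi_{e_i}(x)-\pi_{e_i}(y)|\le d(x,y)$. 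Thus if $d(x,y)\le\epsilon<m\le\ell_i(x)$ then $\pi_{e_i}(y)>0$, i.e. $y$ carries $e_i$; as this holds for every $i$, every $y\in\overline{B_\epsilon(x)}$ features all splits of $x$.

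For the first assertion I would exploit that, by the second, the closed ball cannot reach the boundary strata of $\overline{\ort}$. A point of $\overline{\ort}$ has split set contained in $\{e_1,\dots,e_k\}$, whereas a point of $\overline{B_\epsilon(x)}$ has split set containing $\{e_1,\dots,e_k\}$; hence $\overline{B_\epsilon(x)}\cap\overline{\ort}=\overline{B_\epsilon(x)}\cap\ort$, so the ball meets the closed stratum only in its interior. Since $\overline{\ort}$ is closed in $\bhv$ and $\overline{B_\epsilon(x)}$ is closed, this common set is closed in $\bhv$, and a fortiori $\overline{B_\epsilon(x)}\cap\ort$ is closed in $\ort$; being in addition bounded inside the Euclidean orthant $\overline{\ort}$, it is even compact.

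The routine parts are the Euclidean estimates inside a fixed orthant; the one step deserving care is the global $1$-Lipschitz property of $\pi_e$, that is, passing from the per-orthant bound to a bound along an entire geodesic that may traverse several orthants. This is where the gluing of the BHV complex, together with the continuity of $\pi_e$ on the vanishing faces, is genuinely used; everything else then follows formally.
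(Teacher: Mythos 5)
Your proposal is correct and complete; note that the paper itself offers no proof of this lemma, merely asserting it as a ``direct consequence of the construction,'' so your argument fills in details the paper omits rather than paralleling an existing proof. Your two ingredients --- taking $\epsilon$ strictly below the minimal interior edge length $m$ of $x$, and the global $1$-Lipschitz continuity of the split-length functions $\pi_e$ (a coordinate projection or identically zero on each closed orthant, with matching values on the gluing faces, hence $1$-Lipschitz along any rectifiable path and in particular along the piecewise-linear geodesics of $\bhv$) --- are exactly the natural way to make the assertion quantitative, and the estimate $\pi_{e_i}(y)\geq \ell_i(x)-d(x,y)\geq m/2>0$ correctly yields the second claim. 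One observation worth making explicit: the first assertion as literally stated is trivially true for \emph{every} $\epsilon>0$, since any set of the form $C\cap\ort$ with $C$ closed in $\bhv$ is closed in the subspace topology of $\ort$; the substantive content --- and what the paper actually invokes in the step ``$4\implies 1$'' of its main theorem, where $y=\prj_{\overline{\ort}}(\nu)$ with $d(\mu,y)<\eta$ is upgraded to $y\in\ort$ --- is precisely your identity $\overline{B_\epsilon(x)}\cap\overline{\ort}=\overline{B_\epsilon(x)}\cap\ort$, i.e.\ that the closed ball misses the boundary faces of the closed orthant. Your proof delivers this stronger statement (together with compactness of the intersection), which is the form in which the lemma is actually used, so your reading of the lemma is the right one.
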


\vspace*{-1cm}
\begin{figure}
    \centering
    \includegraphics[width=10cm, height=7.5cm]{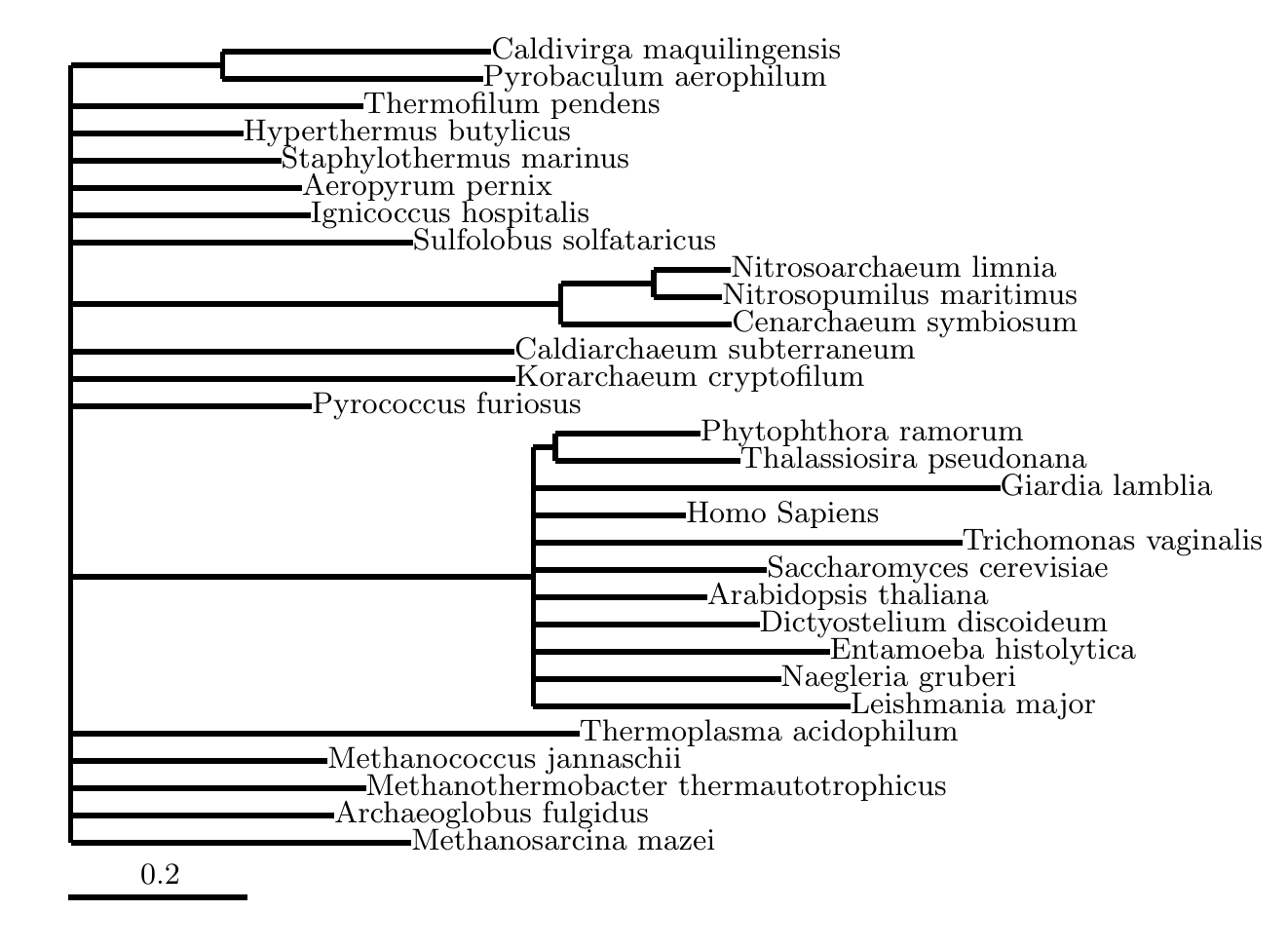}
    \caption{\it Placing eukaryotes (homo sapiens) within the archaea: The Fr\'echet mean of a data set from \cite{lgc20} is  a highly unresolved phylogenetic tree. Lengths of horizontal lines correspond to evolutionary distance, vertical lines to common nodes, the leftmost vertical stands for the common root.
    }
    \label{fig:mean}
\end{figure}
 
\vspace*{-1cm}
\section{Properties as Hadamard Spaces}

For an extensive overview, we refer to \cite{bridson,burago,sturm}. In a Hadamard space $(M,d)$, a function $f: M \to \mathbb{R}$ is (strictly) convex if all compositions with geodesics are (strictly) convex. Convex sets are sets containing all geodesic segments between two points of the set. An example of a strictly convex function is the Fr\'echet function. 
Besides this convexity 
we will also require the following results. 

\begin{theorem}[Theorem 2.1.12 in \cite{bacak}]
\label{theorem:metric_proj}
Let $(M,d)$ be a Hadamard space and let $S \subset M$ be a closed convex 
set. Then the following statements hold true for the \emph{metric projection} 
$\prj_S: M \to S, x \mapsto \argmin_{y \in S} d(x,y)$.
    \begin{enumerate}
        \item The map $x \mapsto \prj_S(x)$ is single-valued for all $x \in M$.
        \item It holds for any $x,y \in M$ that $d \left( \prj_S(y), \prj_S(x) 
        \right) \leq d(x, y)$.
    \end{enumerate}
\end{theorem}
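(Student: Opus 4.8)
The plan is to build everything on the one structural feature of a Hadamard space that replaces Euclidean orthogonality, namely the strong convexity of the squared distance: for any geodesic $\gamma:[0,1]\to M$ with $\gamma(0)=p$, $\gamma(1)=q$ and any $x\in M$,
\begin{equation*}
 d^2\bigl(x,\gamma(t)\bigr)\le (1-t)\,d^2(x,p)+t\,d^2(x,q)-t(1-t)\,d^2(p,q),\qquad t\in[0,1].
\end{equation*}
Both parts of the theorem will be algebraic consequences of this inequality together with completeness and the convexity of $S$.

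For part~1 I would first settle existence. Fix $x$, put $\delta=\inf_{y\in S}d(x,y)$, and take a minimizing sequence $(y_n)\subset S$. Applying the displayed inequality at $t=\tfrac12$ to the midpoint $m_{nk}$ of $y_n,y_k$, which lies in $S$ by convexity, and using $d(x,m_{nk})\ge\delta$, gives $d^2(y_n,y_k)\le 2d^2(x,y_n)+2d^2(x,y_k)-4\delta^2\to 0$, so $(y_n)$ is Cauchy; completeness of $M$ and closedness of $S$ produce a limit $p\in S$ with $d(x,p)=\delta$. Uniqueness uses the same midpoint estimate: if $p_1,p_2$ both realize $\delta$, their midpoint $m\in S$ satisfies $d^2(x,m)\le\delta^2-\tfrac14 d^2(p_1,p_2)$, and $d(x,m)\ge\delta$ forces $p_1=p_2$. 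This establishes that $\prj_S$ is single-valued.

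For part~2 the first step is a variational (obtuse-angle) characterization of the projection. Writing $p=\prj_S(x)$, $q=\prj_S(y)$ and letting $\gamma$ run from $p$ to $q$ inside $S$, minimality of $t\mapsto d^2(x,\gamma(t))$ at $t=0$ combined with the strong-convexity inequality gives, after dividing by $t$ and letting $t\to0^+$, the Pythagorean-type estimate $d^2(x,q)\ge d^2(x,p)+d^2(p,q)$, and symmetrically $d^2(y,p)\ge d^2(y,q)+d^2(p,q)$. I would then introduce the quasilinearization $\langle \overrightarrow{ab},\overrightarrow{cd}\rangle=\tfrac12\bigl(d^2(a,d)+d^2(b,c)-d^2(a,c)-d^2(b,d)\bigr)$, under which these two estimates read $\langle\overrightarrow{px},\overrightarrow{pq}\rangle\le0$ and $\langle\overrightarrow{qy},\overrightarrow{qp}\rangle\le0$. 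The quasilinearization satisfies a purely algebraic additivity and is antisymmetric in each slot, so decomposing $\overrightarrow{pq}=\overrightarrow{px}+\overrightarrow{xy}+\overrightarrow{yq}$ yields $d^2(p,q)=\langle\overrightarrow{pq},\overrightarrow{px}\rangle+\langle\overrightarrow{pq},\overrightarrow{xy}\rangle+\langle\overrightarrow{pq},\overrightarrow{yq}\rangle$, where the first and third terms are exactly the two nonpositive quantities above. Hence $d^2(p,q)\le\langle\overrightarrow{pq},\overrightarrow{xy}\rangle$, and Reshetnyak's Cauchy--Schwarz inequality, valid precisely because $M$ is $\mathrm{CAT}(0)$, bounds the right-hand side by $d(p,q)\,d(x,y)$; dividing by $d(p,q)$ (the case $d(p,q)=0$ being trivial) gives $d(p,q)\le d(x,y)$.

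The main obstacle is part~2: with no genuine inner product available, I cannot invoke orthogonality and Cauchy--Schwarz directly, so the real work is to manufacture their substitutes — the obtuse-angle inequality from minimality of the projection, and Reshetnyak's inequality from the $\mathrm{CAT}(0)$ condition — and to check that the quasilinearization obeys enough formal bilinearity for the telescoping decomposition to close. By contrast, existence and uniqueness in part~1 are comparatively routine once the strong-convexity inequality is in hand.
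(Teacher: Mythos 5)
This statement is not proved in the paper at all: it is imported verbatim, with attribution, as Theorem 2.1.12 of Bac\'ak's monograph \cite{bacak} and used as a black box, so there is no in-paper argument to compare against; your proposal can only be judged on its own merits, and on those it is correct. Part 1 is exactly the classical argument: the strong-convexity (parallelogram-type) inequality at midpoints, combined with $d(x,m_{nk})\geq\delta$ (legitimate since $m_{nk}\in S$ by convexity), gives the Cauchy estimate $d^2(y_n,y_k)\leq 2d^2(x,y_n)+2d^2(x,y_k)-4\delta^2$, and completeness plus closedness of $S$ finish existence, with uniqueness from the same midpoint estimate. Part 2 is also sound: your variational step correctly yields the Pythagorean inequalities $d^2(x,q)\geq d^2(x,p)+d^2(p,q)$ and $d^2(y,p)\geq d^2(y,q)+d^2(p,q)$ (valid because the geodesic from $p$ to $q$ stays in $S$, so $d(x,\gamma(t))\geq d(x,p)$), and the quasilinearization bookkeeping checks out --- the pairing $\langle\overrightarrow{ab},\overrightarrow{cd}\rangle=\tfrac12\bigl(d^2(a,d)+d^2(b,c)-d^2(a,c)-d^2(b,d)\bigr)$ is symmetric, flips sign under reversal of either vector, and is additive in each slot as a purely algebraic identity, so $d^2(p,q)\leq\langle\overrightarrow{pq},\overrightarrow{xy}\rangle$ follows by telescoping. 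The one imported ingredient, the Cauchy--Schwarz bound $\langle\overrightarrow{ab},\overrightarrow{cd}\rangle\leq d(a,b)\,d(c,d)$, is a genuine $\mathrm{CAT}(0)$ fact, but be aware it is not free: it is the Berg--Nikolaev characterization of $\mathrm{CAT}(0)$ (equivalently, Reshetnyak's four-point inequality $d^2(p,y)+d^2(q,x)\leq d^2(p,x)+d^2(q,y)+2\,d(p,q)\,d(x,y)$, as in Proposition 2.4 of \cite{sturm}), so it deserves an explicit citation or proof; unwound, your argument is precisely ``sum the two Pythagorean inequalities and apply the four-point inequality,'' which is the standard route to nonexpansiveness in the literature. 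In short: correct and complete modulo that one standard inequality, and essentially the canonical textbook proof rather than anything the paper itself supplies.
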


\begin{remark}
As the strata of BHV Spaces are locally Euclidean, the metric projection of a tree to
an \emph{adjacent} stratum of positive codimension corresponds to simply removing
the redundant splits and keeping the splits featured in the topology of the 
stratum and their  respective lengths.
\end{remark}
    
\begin{theorem}[Theorem 6.3 in \cite{sturm}]
    \label{theorem:contraction}
    Let $(M,d)$ be a separable Hadamard space. Then $       d(b(\Prb), b(\Prbalt)) \leq W_1(\Prb, \Prbalt)$ for any two $\Prb, \Prbalt \in \wst{M}$.

\end{theorem}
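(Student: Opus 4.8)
The plan is to deduce the contraction from two standard features of Hadamard spaces: the \emph{variance inequality} satisfied by barycenters, and the \emph{quadruple inequality} that plays the role of Cauchy--Schwarz in the nonlinear setting. Throughout write $b=b(\Prb)$ and $b'=b(\Prbalt)$ for the unique Fr\'echet means, which exist since $\Prb,\Prbalt\in\wst{M}$. First I would establish the variance inequality: for any $y\in M$,
\[
    d^2(b,y)\le\int_M\left(d^2(y,z)-d^2(b,z)\right)\diff\Prb(z).
\]
To obtain it, parametrize the geodesic $\gamma$ from $b$ to $y$ and insert the defining non-positive-curvature convexity estimate $d^2(\gamma(t),z)\le(1-t)d^2(b,z)+t\,d^2(y,z)-t(1-t)d^2(b,y)$ into $F_\Prb$; since $b$ minimizes $F_\Prb$, dividing by $t$ and letting $t\downarrow0$ gives the claim. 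The first-moment assumption makes the integrand integrable, because $|d^2(y,z)-d^2(b,z)|\le d(b,y)\bigl(d(y,z)+d(b,z)\bigr)$.

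Next I would apply this inequality twice --- to $\Prb$ at $y=b'$ and to $\Prbalt$ at $y=b$ --- and add the two resulting estimates, whose left-hand sides both equal $d^2(b,b')$:
\[
    2\,d^2(b,b')\le\int_M\left(d^2(b',z)-d^2(b,z)\right)\diff\Prb(z)+\int_M\left(d^2(b,w)-d^2(b',w)\right)\diff\Prbalt(w).
\]
I would then choose an optimal coupling $\pi\in\Pi(\Prb,\Prbalt)$ for $W_1$, which exists because a separable complete metric space is Polish and the Kantorovich problem attains its infimum, and rewrite both marginals' integrals as one integral over $M\times M$ against $\pi$, with integrand $d^2(b',z)-d^2(b,z)+d^2(b,w)-d^2(b',w)$.

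The crux --- and the step I expect to be the main obstacle --- is the pointwise bound on this integrand. In a Euclidean space it collapses to $2\langle w-z,\,b'-b\rangle$, which Cauchy--Schwarz controls by $2\,d(z,w)\,d(b,b')$. The correct nonlinear substitute is the quadruple inequality, valid in every Hadamard space, $d^2(x_1,x_3)+d^2(x_2,x_4)\le d^2(x_1,x_4)+d^2(x_2,x_3)+2\,d(x_1,x_2)\,d(x_3,x_4)$; applied with $(x_1,x_2,x_3,x_4)=(b',b,z,w)$ it yields exactly
\[
    d^2(b',z)-d^2(b,z)+d^2(b,w)-d^2(b',w)\le 2\,d(b,b')\,d(z,w).
\]
Integrating against $\pi$ and recognizing $\int_{M\times M}d(z,w)\,\diff\pi=W_1(\Prb,\Prbalt)$ gives $2\,d^2(b,b')\le 2\,d(b,b')\,W_1(\Prb,\Prbalt)$; dividing by $2\,d(b,b')$ (the statement being trivial when $b=b'$) finishes the argument. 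The real work is a self-contained proof of the quadruple inequality, which I would reduce --- via the comparison estimate applied to geodesics joining the two pairs of points --- to the Euclidean computation above; once that is in hand, every remaining step is routine bookkeeping.
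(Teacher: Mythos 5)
Your argument is correct and coincides with the proof in the cited source: the paper states this result without proof, quoting Theorem 6.3 of Sturm, and Sturm's own proof is exactly your combination of the variance inequality applied twice (at $b(\Prbalt)$ for $\Prb$ and at $b(\Prb)$ for $\Prbalt$), an optimal coupling (which exists since a separable Hadamard space is Polish), and the pointwise quadruple inequality $d^2(x_1,x_3)+d^2(x_2,x_4)\le d^2(x_1,x_4)+d^2(x_2,x_3)+2\,d(x_1,x_2)\,d(x_3,x_4)$. The one ingredient you defer as ``real work'' --- proving the quadruple inequality itself --- is a known theorem valid in every CAT(0) space (Lang--Schroeder; it also appears in Sturm's paper), so citing it rather than reproving it is standard and leaves no gap.
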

    
\section{The Space of Directions}

In a Hadamard space $(M,d)$, it is possible to compute an angle between two 
(non-constant) geodesics $\gamma, \gamma^\prime$ starting at the same point 
$x \in M$. This angle is called the \emph{Alexandrov angle} $\angle_x$ and 
can be computed as follows \cite[Chapter 3]{burago}
\begin{align*}
    \angle_x(\gamma, \gamma^\prime) = \lim_{t, t^\prime \searrow 0} \arccos
        \left( \frac{t^2+ {t'}^2 -d(\gamma(t), \gamma^\prime(t^\prime))}
        {2 t \cdot t^\prime}\right).
\end{align*}
Two geodesics have an equivalent direction at $x$ if the Alexandrov angle 
between them is 0. The set of these equivalence classes is called the \emph{space
of directions at $x$} and is denoted by $\Sigma_x M$. Equipped with the Alexandrov
angle, the space of directions becomes a spherical metric space itself.  For an overview, see 
e.g. Chapter 9 in \cite{burago}. For points  $x,y, z \in M$, we write $\dir_x(y)$ 
for the direction of the (unit speed) geodesic from $x$ to $y$ and we write 
$\angle_x(y,z) = \angle_x(\dir_x(y), \dir_x(z))$.\\

Let $\ort \subset \bhv$ be a stratum with positive codimension $l \geq 1$ and
set
\begin{align*}
    \perpdirx &= \{ \dir_x(z) \vert z \neq x, \ z \in \prj_{\overline{\ort}}^{-1}(\{x\}) \},\\
    \left(\Sigma_x \bhv \right)^\parallel &= \{ \dir_x(z) \vert z \neq x, \ z \in \ort \}.
\end{align*}
The following lemma is concerned with the structure of the space of directions 
in BHV tree spaces. It is inspired by the work of the tangent cone of orthant
spaces in \cite{barden18}. 
For two metric spaces 
$(M_1, d_1), (M_2, d_2)$, recall their \emph{spherical join}
$$M_1  \ast M_2 = \left[0,\frac{\pi}{2}\right]\times M_1\times M_2/\sim \cong \left\{(\cos \theta\, p_1,\sin \theta\, p_2): 0\leq \theta \leq \frac{\pi}{2}, p_i\in M_i,i=1,2\right\}$$
with the metric
$$ d\big((\theta,p_1,p_2),(\theta,p_1,p_2)\big) = \arccos\left(\cos\theta\cos\theta' d_1(p_1,p'_1) + \sin\theta \sin\theta' d_2(p_2,p'_2)\right)\,.$$
In particular, this turns $M_1*M_2$ into a sphere of dimension $n_1+n_2+1$ if $M_i$ is a sphere of dimension $n_i$, $i=1,2$.

\begin{lemma}
    \label{lemma:tancone}
    Let $\ort \subset \bhv$ be a stratum with positive codimension $l \geq 1$ and $x \in \ort$. Then its space of directions can be given the structure of a 
    spherical join
    \begin{align*}
        \Sigma_x \bhv \cong \left(\Sigma_x \ort\right)^\parallel \ast \perpdir, \quad \perpdir\cong \perpdirx
    \end{align*}
\end{lemma}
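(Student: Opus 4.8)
The plan is to establish a local isometric product structure near $x$ and then invoke the standard fact that the space of directions of a product is the spherical join of the factors' spaces of directions. The space of directions is determined by germs of geodesics issuing from $x$, so a local structure on a small ball will suffice.

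First I would use Lemma \ref{lem:ball_in_bhv} to fix $\epsilon > 0$ such that every tree in $\overline{B_\epsilon(x)}$ carries all $N-2-l$ splits of $x$. The lengths of these splits furnish $N-2-l$ \emph{parallel} coordinates that vary freely (each bounded away from $0$), while any further split appearing in a nearby tree is, by construction, compatible with all splits of $x$. I would organize the neighborhood by showing $\overline{B_\epsilon(x)}$ is isometric to a product $U \times V$, where $U \subseteq \overline{\ort}$ is a Euclidean ball in $\mathbb{R}^{N-2-l}$ and $V$ is a neighborhood of the apex of the transverse cone $C(L_x)$ obtained by gluing, along their common apex, the orthants spanned by the admissible additional splits. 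The decisive point is the Pythagorean splitting $d(y,y')^2 = |y_\parallel - y'_\parallel|^2 + d_{L_x}(y_\perp, y'_\perp)^2$: since the splits of $x$ stay compatible with every configuration of extra splits, a geodesic between two trees of $\overline{B_\epsilon(x)}$ never needs to drop them, so its parallel part is a Euclidean segment and its transverse part a geodesic in $C(L_x)$, the two evolving independently.

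Granting this, the tangent cone at $x$ splits as $T_x\overline{\ort} \times C(L_x)$, with $x$ lying over the apex of $C(L_x)$. By the description of spaces of directions of products in \cite[Ch.~9]{burago}, $\Sigma_x \bhv \cong \Sigma_x\overline{\ort} \ast \Sigma_x C(L_x)$, and I would identify the two factors with the objects in the statement. Since $\ort$ is locally Euclidean of dimension $N-2-l$, the factor $\Sigma_x\overline{\ort}$ is the round sphere $S^{N-3-l}$, which is exactly $\left(\Sigma_x\bhv\right)^\parallel$; and the space of directions at the apex of $C(L_x)$ is $L_x$ itself. To see $L_x = \perpdirx$, I would combine the Remark with Theorem \ref{theorem:metric_proj}: under the product splitting $\prj_{\overline{\ort}}(y) = y_\parallel$, so $\prj_{\overline{\ort}}^{-1}(\{x\})$ consists precisely of the trees agreeing with $x$ on the parallel coordinates, i.e. the purely transverse points, whose directions $\dir_x(z)$ sweep out $L_x$. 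Orthogonality (Alexandrov angle $\pi/2$ between every parallel and every perpendicular direction) is then automatic from the join, so no separate angle computation is needed. For the identification $\perpdir \cong \perpdirx$, I would note that $L_x$ depends only on the combinatorial data of which splits may be adjoined to the topology of $\ort$, which is identical for every point of $\ort$; translating the parallel coordinates within the Euclidean stratum $\overline{\ort}$ carries the transverse orthant structure at $x$ isometrically onto that at any $x' \in \ort$, giving a canonical isometry $\perpdirx \cong \left(\Sigma_{x'}\bhv\right)^\perp$, and this common space is what $\perpdir$ denotes.

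I expect the main obstacle to be rigorously establishing the Pythagorean splitting of the metric, i.e. proving that geodesics between nearby trees retain the splits of $x$ and decouple into a Euclidean parallel part and a transverse part. This is the one place where the BHV orthant combinatorics must be used in earnest; the orthant-space tangent-cone analysis of \cite{barden18} provides the template, and the compatibility of the splits of $x$ with all nearby splits is precisely the property that makes the decoupling work.
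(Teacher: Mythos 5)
Your argument is correct, and it rests on the same decisive ingredient as the paper's own proof --- the Pythagorean splitting of the squared distance into a parallel part along $\ort$ and a transverse part over the link --- but you finish it by a genuinely different route. The paper works directly on the sphere $\partial B_\epsilon(x)$: it constructs the join map $\Phi(\sigma) = \left(\theta(\sigma), \phi^\parallel(\sigma), \phi^\perp(\sigma)\right)$ by hand, with $\theta(\sigma) = \arcsin\left(d(y_\sigma,\ort)/\epsilon\right)$ and with $y^\perp$, $y^\parallel$ obtained by adding and removing the extra splits, and then verifies the spherical-join metric through an explicit computation of $\cos\left(\angle_x(y_1,y_2)\right)$ from the identity $d^2(y_1,y_2) = d^2(y_1^\parallel,y_2^\parallel) + d^2(y_1^\perp,y_2^\perp)$. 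You instead upgrade that same identity to a local isometric product structure with transverse factor a cone neighborhood in $C(L_x)$, and then cite the standard facts that the space of directions of a product is the spherical join of the factors' spaces of directions and that the space of directions at a cone apex is the link. Your route buys conceptual clarity and avoids the cosine computation; it costs you two standard but nontrivial black boxes and the slightly stronger claim that a whole neighborhood carries the product metric (note that a metric ball in a product is not itself a product of balls, so you should say ``a neighborhood of $x$ is isometric to a product'' rather than asserting this of $\overline{B_\epsilon(x)}$). A second small caveat: strictly, the space of directions at the apex of $C(L_x)$ is $L_x$ with its metric truncated at $\pi$; this is harmless here because $\perpdirx$ is by definition metrized by Alexandrov angles. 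Your identifications of the two factors match the paper's exactly: the projection property you invoke via Theorem \ref{theorem:metric_proj} does force $\theta = \pi/2$ for directions toward $\prj_{\overline{\ort}}^{-1}(\{x\})$, and your base-point independence argument (translating the parallel coordinates within the Euclidean stratum) is precisely the paper's identification of directions whose extra-split lengths agree, which justifies writing $\perpdir$ without reference to $x$. Finally, your closing remark correctly locates the crux: the paper dispenses with the Pythagorean decoupling in a single line (``by definition of the geodesic distance and that $\bhv$ is Euclidean in each stratum''), so once you carry out that decoupling via the orthant combinatorics following \cite{barden18}, your proof is at least as complete as the published one.
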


\begin{proof}
    For sufficiently small $\epsilon > 0$, with $B_\epsilon(x)$ from Lemma \ref{lem:ball_in_bhv} for any geodesic starting at $x$ we have a one-to-one correspondence between its 
    direction $\sigma \in \Sigma_x \bhv$ and a point $y_\sigma \in \partial B_\epsilon(x)$ with $d(x,y_\sigma) = 
    \epsilon$. This gives rise to the angular part of the join 
    \begin{align*}
        \theta: \Sigma_x \bhv \to [0, \pi/2], \sigma \mapsto \arcsin\left(\frac{d(y_\sigma, \ort)}{\epsilon}\right)\,.
    \end{align*}
    Furthermore, as remarked before Lemma \ref{lem:ball_in_bhv}, the topology of $y$ features
    all splits of $x$ and at most $l$ additional splits. With the map $y\mapsto x_y : 
    \partial\overline{B_\epsilon(x)}\to \overline{B_\epsilon(x)}$,  adding to $x$ all splits of $y$, not present in $x$, with their lengths from $y$, set
    $$y^\perp := x_y,\quad y^\parallel :=  P_{\overline{\ort}}(x_y)\,,$$
    where we identify the directions of $x_y$ and $x'_y$ at $x\in\ort$ and $x'\in \ort$, respectively, if their split lengths after removing those of $x$ and $x'$, respectively, agree.
    In conjunction with
    \begin{align*}
        \phi^\perp(\sigma) := \dir_x(y^\perp_\sigma),
            \quad \phi^\parallel(\sigma) := \dir_x(y^\parallel_\sigma),
    \end{align*}
     thus obtain, with the second factor independent of the base point,   \begin{align*}
        \Phi : &\Sigma_x \bhv \to \left(\Sigma_x 
            \ort\right)^\parallel \ast \perpdirx,\quad
        \sigma \mapsto \left(\theta(\sigma), \phi^\parallel(\sigma), 
            \phi^\perp(\sigma)\right).
    \end{align*}
    Straightforward computation verifies that  $\Phi$ is a bijection.  
    
    It remains to show that $\Phi$ is isometric. For notational simplicity, 
    suppose $\epsilon = 1$.  Let $\sigma_1, \sigma_2 \in \Sigma_x \bhv$ with 
    $y_i := y_{\sigma_i}, i=1,2$ and
        $r_i^\perp := d(y_i, \ort)$, $r_i^\parallel := \sqrt{1 - 
            (r_i^\perp)^2}$. Exploiting $d^2(y_1, y_2) = 
    d^2(y_1^\parallel, y_2^\parallel) + d^2(y_1^\perp,y_2^\perp)$ by definition of the geodesic distance and that $\bhv$ is Euclidean in each stratum, we have indeed,
    {\footnotesize
    \begin{align*}
        &\cos(\angle_x(y_1, y_2)) = \frac{1 + 1 - d^2(y_1,y_2)}{2}\\
        &= \frac{(r_1^\perp)^2 + (r_2^\perp)^2 - d^2(y_1^\perp,y_2^\perp)}{2}
            +  \frac{(r_1^\parallel)^2 + (r_2^\parallel)^2 - d^2(y_1^\parallel,
            y_2^\parallel)}{2}\\
        &= r_1^\parallel \cdot r_2^\parallel \cdot (\cos(\angle_x(y_1^\parallel, 
            y_2^\parallel)) + r_1^\perp \cdot r_2^\perp \cdot 
            (\cos(\angle_x(y_1^\perp, y_2^\perp))\\
        &= \cos(\theta(\sigma_1)) \cos(\theta(\sigma_2)) 
            \cos\left(\angle_x(\phi^\parallel(\sigma_1), \phi^\parallel(\sigma_2))\right)\\
        &\quad + \sin(\theta(\sigma_1)) \sin(\theta(\sigma_2)) 
            \cos\Big(\angle_x(\phi^\perp(\sigma_1), \phi^\perp(\sigma_2))\Big).
    \end{align*}
    }
\end{proof}
    In light of this fact, we shall henceforth abuse notation and will identify $\perpdirx \cong\perpdir$  and any
$\sigma \in \perpdir$ with natural embedding into $\Sigma_x M$ for all 
 $x \in \ort$.

\section{Equivalent of Notions of Stickiness in BHV Spaces}


For a sequence $(X_i)_{i \in \mathbb{N}}$ of i.i.d. random variables following a distribution $\Prb$, let $\Prb_n = \frac{1}{n} \sum_{i=1}^n X_i$ denote the empirical measure.

\begin{definition}[Three Notions of Stickiness]
\label{def:sticky}
Let $\Prb \in \wst{\bhv}$ be a probability distribution in a BHV space and $\ort \subset \bhv$ be a stratum with codimension $l \geq 1$. Then on $\ort$, $\Prb$ is called
\begin{description} 
\item[Wasserstein sticky] if there is  $\epsilon > 0$ such that $b(\Prbalt) \in \ort$ for all $\Prbalt \in \wst{\bhv}$ with $W_1(\Prb,\Prbalt) 
< \epsilon$,
\item[perturbation sticky]
if for any $\Prbalt \in \wst{\bhv}$ there is $t_\Prbalt > 0$ such that  $b((1-t)\Prb + t \Prbalt) \in \ort$ for all $0 < t <  t_\Prbalt$,
\item[sample sticky] if for any sequence of random variables $(X_i)_{i \in  \mathbb{N}}\iid\Prb$ 
there is a random $N\in \NN$ such that $b(\Prb_n) \in \ort$ for all $n \geq N$ a.s.
\end{description}
\end{definition}



\begin{theorem}[\cite{lammers2023types}]
    \label{lemma:lip_angle}
    \label{lemma:lip_wst}
    \label{thm:dir-der-formula}
Let $(M,d)$ be a Hadamard space, $x \in M$, and $\gamma: [0, L] \to M$ be a unit speed geodesic with direction $\sigma$ at $\gamma(0) = x$. Then for any $\Prb \in \wst{M}$, the directional derivative
of the Fr\'echet function $\nabla_\sigma F_\Prb (x) = 
\frac{\text{d}}{\text{d}t} F_\Prb(\gamma(t))\lvert_{t=0}$ exists, is 
well-defined and
\begin{align*}
    \nabla_\sigma F_\Prb (x) = - \int_M \cos(\angle_x(\sigma, \dir_x(z)))
        \cdot d(x,z) \ \diff\Prb(z).
\end{align*}
In particular, it is
\begin{enumerate}
    \item Lipschitz continuous as a map $\Sigma_x \bhv \to \mathbb{R},
        \sigma \mapsto \nabla_\sigma F_\Prb(x)$, and
    \item 1-Lipschitz continuous as a map $\wst{M} \to \mathbb{R},
        \Prb \mapsto \nabla_\sigma F_\Prb(x)$.
\end{enumerate}
\end{theorem}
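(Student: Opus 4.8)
The plan is to differentiate $F_\Prb$ under the integral sign, justifying the interchange of limit and integral by dominated convergence, and then to read off the two continuity statements from the resulting closed form. Fix $y \in M$ with $y \neq x$. The standard first variation formula in nonpositively curved spaces (see \cite{bridson,burago}) gives $\lim_{t \searrow 0}\big(d(\gamma(t),y) - d(x,y)\big)/t = -\cos(\angle_x(\sigma, \dir_x(y)))$; multiplying by $d(\gamma(t),y)+d(x,y) \to 2d(x,y)$ identifies the pointwise limit of the squared-distance difference quotients as $-2\,d(x,y)\cos(\angle_x(\sigma,\dir_x(y)))$. To pass this through the integral I would dominate the quotients uniformly in $t$: since $\gamma$ is unit speed, the triangle inequality yields $|d(\gamma(t),y)-d(x,y)| \le t$ and $d(\gamma(t),y)+d(x,y)\le 2d(x,y)+L$, whence
\begin{align*}
\left|\frac{d^2(\gamma(t),y)-d^2(x,y)}{2t}\right| \le d(x,y) + \tfrac{L}{2}.
\end{align*}
The right-hand side is $\Prb$-integrable precisely because $\Prb \in \wst{M}$ has finite first moment, so dominated convergence yields existence of $\tfrac{\diff}{\diff t}F_\Prb(\gamma(t))|_{t=0}$ together with the claimed formula. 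As the integrand depends on $\gamma$ only through $\angle_x(\sigma,\cdot)$, the value depends only on the direction $\sigma$, giving well-definedness.

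For the first continuity statement I would use that $\cos$ is $1$-Lipschitz and that the Alexandrov angle is a metric on the space of directions, so that $|\angle_x(\sigma_1, v) - \angle_x(\sigma_2, v)| \le \angle_x(\sigma_1,\sigma_2)$ for every direction $v$. Inserting this into the formula gives
\begin{align*}
\big|\nabla_{\sigma_1}F_\Prb(x) - \nabla_{\sigma_2}F_\Prb(x)\big| \le \angle_x(\sigma_1,\sigma_2)\int_M d(x,z)\,\diff\Prb(z),
\end{align*}
i.e. the desired Lipschitz bound with constant equal to the finite first moment of $\Prb$ about $x$.

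The second statement is the crux and the only place where nonpositive curvature is essential. Writing $g(z) := \cos(\angle_x(\sigma,\dir_x(z)))\,d(x,z)$, the formula reads $\nabla_\sigma F_\Prb(x) = -\int_M g\,\diff\Prb$, so by Kantorovich--Rubinstein duality it suffices to show that $g$ is $1$-Lipschitz; then $|\nabla_\sigma F_\Prb(x) - \nabla_\sigma F_\Prbalt(x)| = |\int g\,\diff\Prb - \int g\,\diff\Prbalt| \le W_1(\Prb,\Prbalt)$. I would factor $g = f\circ\log_x$, where $\log_x z = (d(x,z),\dir_x(z))$ maps into the tangent cone $C(\Sigma_x M)$ and $f(v) = |v|\cos\angle(\sigma, v)$ is the inner product with $\sigma$. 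The map $\log_x$ is $1$-Lipschitz: the CAT(0) angle comparison inequality $2d(x,z_1)d(x,z_2)\cos\angle_x(z_1,z_2) \ge d^2(x,z_1)+d^2(x,z_2)-d^2(z_1,z_2)$, inserted into the cone metric, is exactly $d_C(\log_x z_1,\log_x z_2)\le d(z_1,z_2)$. The functional $f$ is $1$-Lipschitz because it equals the negative of the Busemann function of the ray $s\mapsto s\sigma$ in the cone: a direct cone-metric computation gives $d_C(v,s\sigma)-s\to -f(v)$ as $s\to\infty$, and Busemann functions, being decreasing limits of the $1$-Lipschitz functions $v\mapsto d_C(v,s\sigma)-s$, are $1$-Lipschitz. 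Composing two $1$-Lipschitz maps proves the claim.

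I expect the main obstacle to be exactly this last step, namely establishing the $1$-Lipschitz property of $g$ with the sharp constant. The naive triangle-inequality estimate on $g$ fails because of the unbounded factor $d(x,z)$, and one genuinely needs the contraction property of $\log_x$ — equivalently, that $\exp_x$ is distance non-decreasing — which holds only by virtue of the global nonpositive curvature of $M$.
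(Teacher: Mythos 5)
You should first note a mismatch with the assignment: the paper contains \emph{no} proof of this theorem --- it is imported from the companion work \cite{lammers2023types} and used as a black box --- so there is no in-paper argument to compare against, and your proposal can only be judged on its own merits.

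On those merits, your proof is correct, and it is the natural argument. The differentiation step is sound: the first variation formula $\lim_{t \searrow 0}\bigl(d(\gamma(t),y)-d(x,y)\bigr)/t = -\cos(\angle_x(\sigma,\dir_x(y)))$ does hold with the Alexandrov angle in CAT(0) spaces (one direction from the comparison law of cosines and monotonicity of comparison angles, the other by cutting the geodesic $[x,y]$ at a nearby point), your domination $\lvert d^2(\gamma(t),y)-d^2(x,y)\rvert/(2t) \le d(x,y)+L/2$ is exactly what makes dominated convergence work under the finite-first-moment hypothesis, and well-definedness in $\sigma$ follows as you say since the integrand sees $\gamma$ only through angles. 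The one loose end is the possible atom at $y=x$, where $\dir_x(y)$ is undefined: there the quotient is $t/2 \to 0$, consistent with the convention that the integrand vanishes at $z=x$; this should be said, but it is trivial. The Lipschitz-in-$\sigma$ estimate with constant $\int_M d(x,z)\,\diff\Prb(z)$ is fine (the statement's ``$\Sigma_x\bhv$'' is evidently a typo for $\Sigma_x M$, and your argument covers the general case). For the crux --- $1$-Lipschitzness of $\phi_\sigma(z)=\cos(\angle_x(\sigma,\dir_x(z)))\,d(x,z)$ --- your factorization through the nonexpansive logarithm is exactly right: the CAT(0) angle comparison $2d(x,z_1)d(x,z_2)\cos\angle_x(z_1,z_2) \ge d^2(x,z_1)+d^2(x,z_2)-d^2(z_1,z_2)$ rearranges to $d_C(\log_x z_1,\log_x z_2)\le d(z_1,z_2)$, and $v\mapsto \lvert v\rvert\cos\angle(\sigma,v)$ is the pointwise (indeed monotone) limit of the $1$-Lipschitz functions $v\mapsto d_C(v,s\sigma)-s$, which needs only the triangle inequality in the Euclidean cone; monotonicity is not even required. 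Your closing diagnosis is also accurate --- the naive estimate on $\phi_\sigma$ fails because of the unbounded factor $d(x,z)$, and nonpositive curvature enters precisely through the contraction property of $\log_x$. One small improvement: you do not need Kantorovich--Rubinstein duality, only its easy direction --- integrate $\lvert \phi_\sigma(u)-\phi_\sigma(v)\rvert \le d(u,v)$ against an arbitrary coupling and take the infimum. This matters slightly, since full duality is usually stated for Polish spaces, whereas the theorem is asserted for arbitrary Hadamard spaces, which need not be separable.
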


The following 
result follows directly from
Lemma \ref{lemma:tancone} and Theorem \ref{lemma:lip_angle}.

\begin{corollary}
\label{lemma:const_der}
    Let $\ort \subset \bhv$ be a stratum of positive codimension $l \geq 1$, 
    let $\Prb  \in \wst{\bhv}$ and identify $\sigma \in \perpdir$ for all 
    $x \in \ort$ across all spaces of directions in $\ort$. Then
     $x \mapsto \nabla_\sigma F_\Prb (x), \ort \to \RR$ is constant.
\end{corollary}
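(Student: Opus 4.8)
The plan is to combine the explicit directional-derivative formula from Theorem~\ref{thm:dir-der-formula} with the structural decomposition of the space of directions provided by Lemma~\ref{lemma:tancone}. Fix $\sigma \in \perpdir$ and recall that, by the identification made after the proof of Lemma~\ref{lemma:tancone}, $\sigma$ is realized simultaneously at every base point $x \in \ort$, with its geodesic representatives pointing into the fiber $\prj_{\overline{\ort}}^{-1}(\{x\})$ over $x$. The goal is to show that the map $x \mapsto \nabla_\sigma F_\Prb(x)$ is independent of $x$, so I would write out the formula
\begin{align*}
    \nabla_\sigma F_\Prb (x) = - \int_{\bhv} \cos\big(\angle_x(\sigma, \dir_x(z))\big)\, d(x,z)\, \diff\Prb(z),
\end{align*}
and argue that the integrand, for a fixed data point $z$, does not depend on $x$ as $x$ ranges over $\ort$.

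First I would decompose an arbitrary target $z \in \bhv$ using the metric projection onto the closed stratum $\overline{\ort}$: write $z^\parallel := \prj_{\overline{\ort}}(z) \in \overline{\ort}$ and let $z^\perp$ denote the orthogonal component, analogous to the $y^\parallel, y^\perp$ splitting in the proof of Lemma~\ref{lemma:tancone}. Since $\sigma \in \perpdir$ is purely perpendicular, i.e.\ has angular coordinate $\theta = \pi/2$ in the spherical join, the join metric formula collapses: the cosine of the Alexandrov angle between $\sigma$ and $\dir_x(z)$ retains only the perpendicular contribution, so that $\cos(\angle_x(\sigma, \dir_x(z)))$ equals $\sin(\theta(\dir_x(z)))$ times $\cos$ of the perpendicular angle, and the parallel displacement of $x$ within $\ort$ is entirely absorbed into the $\phi^\parallel$ factor which $\sigma$ ignores.

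The key computation is then to check that the product $\cos(\angle_x(\sigma, \dir_x(z)))\cdot d(x,z)$ depends on $z$ only through its perpendicular data. Using the Pythagorean splitting $d^2(x,z) = d^2(x, z^\parallel) + d^2(z^\parallel, z^\perp)$ valid because geodesics decompose into a parallel and a perpendicular leg, and the fact that the perpendicular leg is isometrically identified across all $x \in \ort$ (this is precisely the content of $\perpdir \cong \perpdirx$ being base-point independent), the quantity $\cos(\angle_x(\sigma, \dir_x(z)))\cdot d(x,z)$ reduces to $-\langle \sigma, z^\perp\rangle$-type inner product taken entirely in the perpendicular fiber. Concretely, $\cos(\angle_x(\sigma,\dir_x(z)))\,d(x,z)$ equals the signed length of the projection of the perpendicular component $z^\perp$ onto the ray $\sigma$, and this is manifestly independent of where $x$ sits in $\ort$.

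The main obstacle will be handling the integrability and the points $z$ for which the decomposition degenerates, namely those $z$ lying in $\overline{\ort}$ itself (where $z^\perp$ is trivial and $\angle_x(\sigma,\dir_x(z)) = \pi/2$, contributing zero) and, more delicately, verifying that the perpendicular projection is genuinely base-point independent despite the non-Euclidean gluing of strata away from $\ort$. For the latter I would lean on Lemma~\ref{lem:ball_in_bhv} to restrict attention to a neighborhood where the topology is controlled and the identification of perpendicular directions is literal, and then invoke the global consistency of metric projection from Theorem~\ref{theorem:metric_proj}; once the integrand is shown to be a constant function of $x$ for $\Prb$-almost every $z$, dominated convergence (with the $d(x,z)$ bound controlled by the first-moment condition $\Prb \in \wst{\bhv}$ and the fact that $|\cos| \le 1$) transfers constancy to the integral, completing the argument.
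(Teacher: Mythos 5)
Your overall route --- inserting the formula of Theorem~\ref{thm:dir-der-formula} and showing that the integrand $\phi_\sigma(z) = -d(x,z)\cos(\angle_x(\sigma,\dir_x(z)))$ is independent of the base point $x \in \ort$ --- is exactly what the paper intends: its ``proof'' is the single remark that the corollary follows from Lemma~\ref{lemma:tancone} and Theorem~\ref{thm:dir-der-formula}, with the pointwise statement delegated to the companion work \cite{lammers2023types}. Your reduction of the angle via the join metric, namely $\cos(\angle_x(\sigma,\dir_x(z))) = \sin(\theta(\dir_x(z)))\cdot\cos\big(\angle(\sigma,\phi^\perp(\dir_x(z)))\big)$ when $\theta(\sigma)=\pi/2$, is also correct. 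The genuine gap is the key step: the Pythagorean splitting $d^2(x,z) = d^2(x,z^\parallel) + d^2(z,\overline{\ort})$, and the ensuing claim that $\phi_\sigma(z)$ equals the signed projection of $z^\perp$ onto $\sigma$ computed in the fiber, are valid only for $z$ whose topology contains all splits of $\ort$, i.e.\ in the region where $\bhv$ is locally a metric product over $\overline{\ort}$ (the setting of Lemma~\ref{lem:ball_in_bhv}); but $\Prb$ charges all of $\bhv$. Concretely, in $\mathbb{T}_4$ let $\ort$ be the open axis of a split $e_0$, let $x$ have $e_0$-length $u_0>0$, and let $z$ lie in the orthant of splits $\{f,g\}$ with $f$ compatible with $e_0$ but $g$ incompatible with $e_0$, with split lengths $f_z, g_z$. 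Unfolding the orthants $(e_0,f)$ and $(f,g)$ across the $f$-axis gives $d^2(x,z) = (u_0+g_z)^2 + f_z^2$, while $\prj_{\overline{\ort}}(z)$ is the cone point, so that $d^2\big(x,\prj_{\overline{\ort}}(z)\big) + d^2(z,\overline{\ort}) = u_0^2 + g_z^2 + f_z^2$: your identity fails by $2u_0g_z$. Moreover, the perpendicular datum of $\dir_x(z)$ here is the direction of the $f$-leg with weight $d(x,z)\sin(\theta(\dir_x(z))) = f_z$, \emph{not} the direction of $z^\perp$ with weight $\sqrt{f_z^2+g_z^2}$; and when the geodesic from $x$ to $z$ is a cone path through a lower-dimensional stratum, $\dir_x(z)$ is purely parallel, so $\phi_\sigma(z)=0$ for every $\sigma \in \perpdir$, although the projection of $z^\perp$ onto $\sigma$ need not vanish. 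Hence $\phi_\sigma(z)$ is not a function of the metric-projection data $(z^\parallel, z^\perp)$ alone, and your proposed mechanism for base-point independence is incorrect outside the product region.

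You do flag this difficulty yourself, but the proposed repair does not close it: Lemma~\ref{lem:ball_in_bhv} only controls a small ball around $x$, not the geodesic to a distant $z$, and Theorem~\ref{theorem:metric_proj} yields the nonexpansiveness \emph{inequality} for $\prj_{\overline{\ort}}$, not the identity you would need. What actually has to be shown is that the first variation $\frac{\diff}{\diff t}\,\tfrac{1}{2}d^2(\gamma_x(t),z)\big\vert_{t=0^+} = \phi_\sigma(z)$, equivalently $-d(x,z)\sin(\theta(\dir_x(z)))\cos\big(\angle(\sigma,\phi^\perp(\dir_x(z)))\big)$, is constant in $x\in\ort$, and this requires tracking how the initial leg and the combinatorial structure of the geodesics from points of $\ort$ to the fixed tree $z$ vary along the stratum: in the example above the value $f_z$ \emph{is} constant in $u_0$, but for reasons invisible to the projection decomposition. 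That pointwise constancy is the entire content of the corollary and is what the cited result from \cite{lammers2023types} supplies. Once it is in place, your concluding step is fine, though simpler than you make it: since $\lvert\phi_\sigma(z)\rvert \le d(x,z)$ and $\Prb \in \wst{\bhv}$, the integrand is integrable and plain linearity of the integral transfers pointwise constancy to $\nabla_\sigma F_\Prb(x)$; no dominated convergence argument is needed, as no limit is interchanged.
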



\textbf{Assumption 1:} 
For $X \sim \Prb \in \wst{\bhv}$ with $b(\Prb) = \mean 
\in \ort$ for a stratum $\ort \subset \bhv$ assume that
\begin{align*}
    \Prb\{\phi_{\sigma}(X) = 0\} < 1 \mbox{ for all }\sigma \in \perpdir,
\end{align*}
where for $z\in \bhv$, 
$    \phi_\sigma(z) = -d(z,\mean) \cos(\angle_\mean(\sigma,\dir_\mean(z)))\,.$

\begin{theorem}
\label{theorem:main}
Let $N \geq 4$ and consider a stratum $\ort \subset \bhv$ with positive
codimension $l \geq 1$. Then the following statements are equivalent for a
probability distribution $\Prb \in \wst{\bhv}$ with $\mean = b(\Prb) \in \ort$.
\begin{enumerate}
    \item $\Prb$ is Wasserstein sticky on $\ort$.
    \item $\Prb$ is perturbation sticky on $\ort$.
    \item $\Prb$ is sample sticky on $\ort$ and fulfills Assumption 1.
    \item For any direction $\sigma \in \perpdir$, we have that 
    $\nabla_\sigma  F_\Prb(\mean) > 0$.
\end{enumerate}
\end{theorem}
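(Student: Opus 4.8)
The plan is to prove the four-fold equivalence through the cycle $4\Rightarrow1\Rightarrow2\Rightarrow4$ for the Wasserstein, perturbation and derivative conditions, and then to attach statement~3 via $4\Rightarrow3$ and $3\Rightarrow4$. The device I would use repeatedly is a short geometric lemma, stated first: if $\Prbalt\in\wst{\bhv}$ has a Fr\'echet minimizer $\mean'$ inside the ball $B_\epsilon(\mean)$ of Lemma~\ref{lem:ball_in_bhv} and $\nabla_\sigma F_\Prbalt(x)>0$ for every $\sigma\in\perpdir$ and every $x\in\ort\cap B_\epsilon(\mean)$, then $\mean'\in\ort$. Indeed, were $\mean'\notin\ort$, put $x_0:=\prj_{\overline{\ort}}(\mean')$; since every point of $\overline{B_\epsilon(\mean)}$ carries all splits of $\mean$ (Lemma~\ref{lem:ball_in_bhv}) one has $x_0\in\ort$, and $\dir_{x_0}(\mean')\in\perpdir$ after the identification of Lemma~\ref{lemma:tancone}. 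Positivity of the derivative at $x_0$ together with convexity of $F_\Prbalt$ along the geodesic $[x_0,\mean']$ would give $F_\Prbalt(\mean')>F_\Prbalt(x_0)$, contradicting minimality.

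For $4\Rightarrow1$ I would use compactness of $\perpdir$ and Theorem~\ref{thm:dir-der-formula}(1) to obtain $m:=\min_{\sigma\in\perpdir}\nabla_\sigma F_\Prb(\mean)>0$; the $1$-Lipschitz bound of Theorem~\ref{thm:dir-der-formula}(2) keeps $\nabla_\sigma F_\Prbalt(\mean)>0$ for all $\sigma$ once $W_1(\Prb,\Prbalt)<m$, and Corollary~\ref{lemma:const_der} spreads this positivity over all of $\ort$. As Theorem~\ref{theorem:contraction} keeps $b(\Prbalt)$ within $W_1(\Prb,\Prbalt)$ of $\mean$, the geometric lemma (with threshold also below $\epsilon$) yields $b(\Prbalt)\in\ort$. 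For $1\Rightarrow2$, convexity of $W_1$ gives $W_1(\Prb,(1-t)\Prb+t\Prbalt)\le t\,W_1(\Prb,\Prbalt)$, so small $t$ lands inside the Wasserstein ball of stickiness. For $2\Rightarrow4$ I argue by contraposition: as $\mean$ minimizes the strictly convex $F_\Prb$, all directional derivatives there are $\ge0$, so failure of~4 means $\nabla_{\sigma_0}F_\Prb(\mean)=0$ for some $\sigma_0\in\perpdir$. Choosing $w$ on the geodesic leaving $\mean$ in direction $\sigma_0$ and exploiting that $\Prb\mapsto\nabla_\sigma F_\Prb(x)$ is linear, I find $\nabla_{\sigma_0}F_{(1-t)\Prb+t\delta_w}(\mean)=-t\,d(\mean,w)<0$ for all $t>0$; by Corollary~\ref{lemma:const_der} this stays negative throughout $\ort$, so no point of $\ort$ minimizes the perturbed Fr\'echet function and perturbation stickiness fails.

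For $4\Rightarrow3$, Assumption~1 is automatic, since $\Prb\{\phi_\sigma(X)=0\}=1$ would force $\nabla_\sigma F_\Prb(\mean)=0$. For sample stickiness, $\nabla_\sigma F_{\Prb_n}(\mean)=\frac1n\sum_{i=1}^n\phi_\sigma(X_i)$, and I would promote the pointwise strong law to a uniform one over the compact $\perpdir$, using the eventual Lipschitz bound from $\frac1n\sum_i d(\mean,X_i)\to\int d(\mean,z)\,\diff\Prb(z)$, to get $\inf_{\sigma\in\perpdir}\nabla_\sigma F_{\Prb_n}(\mean)>0$ eventually a.s.; since $W_1(\Prb_n,\Prb)\to0$ a.s. pins $b(\Prb_n)$ near $\mean$, the geometric lemma gives $b(\Prb_n)\in\ort$ for all large $n$ a.s. Conversely, for $3\Rightarrow4$ I again argue by contraposition: if $\nabla_{\sigma_0}F_\Prb(\mean)=0$ while Assumption~1 holds, then $S_n:=\sum_{i=1}^n\phi_{\sigma_0}(X_i)=n\,\nabla_{\sigma_0}F_{\Prb_n}(\mean)$ is a mean-zero random walk with integrable non-degenerate increments, so Chung--Fuchs gives $\liminf_n S_n=-\infty$ a.s.; thus $\nabla_{\sigma_0}F_{\Prb_n}(\mean)<0$, hence $b(\Prb_n)\notin\ort$ by Corollary~\ref{lemma:const_der}, infinitely often a.s., contradicting sample stickiness.

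The truly routine steps are the $W_1$-convexity estimate and the linearity computation. The two points I expect to demand real care are upgrading the strong law to hold uniformly over the whole cone $\perpdir$ in $4\Rightarrow3$, so that one almost-sure event governs all perpendicular directions at once; and the recurrence argument in $3\Rightarrow4$, where Assumption~1 supplies exactly the non-degeneracy that Chung--Fuchs requires—dropping it would permit a mean-zero but almost surely vanishing increment that never oscillates, breaking the implication. The geometric lemma is the conceptual core but is brief once the locally Euclidean strata (the Remark after Theorem~\ref{theorem:metric_proj}) and the constancy of Corollary~\ref{lemma:const_der} are in hand.
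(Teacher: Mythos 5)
Your proposal is correct, and in all essentials it reproduces the paper's proof: the ``geometric lemma'' you isolate up front is exactly the projection-plus-convexity argument the paper embeds in its step $4 \implies 1$ (project the putative mean $\meanalt$ onto $\overline{\ort}$, note the foot point lies in $\ort$ by Lemma~\ref{lem:ball_in_bhv} and Theorem~\ref{theorem:metric_proj}, observe $\dir_y(\meanalt)\in\perpdir$, and contradict minimality via a positive directional derivative and convexity), and your $3 \implies 4$ --- Chung--Fuchs recurrence of the mean-zero walk $S_n=\sum_i\phi_{\sigma_0}(X_i)$, with Assumption~1 excluding the degenerate case $S_n\equiv 0$ and oscillation contradicting the nonnegativity forced by sample stickiness and Corollary~\ref{lemma:const_der} --- is the paper's argument almost verbatim. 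Two local deviations are worth recording. First, you prove $2 \implies 4$ by contraposition: vanishing of $\nabla_{\sigma_0}F_\Prb(\mean)$ plus the affine dependence of the derivative formula on $\Prb$ gives $\nabla_{\sigma_0}F_{(1-t)\Prb+t\delta_w}(\mean)=-t\,d(\mean,w)<0$, and Corollary~\ref{lemma:const_der} spreads this negativity over all of $\ort$, so no point of the stratum can minimize. The paper argues directly and therefore must first identify $b(\Prbalt_t)$ with $\mean$, using $\prj_\ort(y)=\mean$ and strict convexity of the Fr\'echet function on $\ort$; your route trades that identification step for an appeal to constancy and is, if anything, slightly cleaner. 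Second, your $4 \implies 3$ is heavier than necessary: the paper simply combines almost sure $W_1$-convergence of $\Prb_n$ to $\Prb$ (Villani, Theorem 6.9) with the already-established Wasserstein stickiness to get sample stickiness in one line, and notes Assumption~1 is automatic (as you also observe). Your uniform strong law over the compact $\perpdir$, via the equicontinuity coming from the Lipschitz constants $\frac{1}{n}\sum_i d(\mean,X_i)$, does work --- but even within your own scheme it is dispensable, since the $1$-Lipschitz dependence of $\nabla_\sigma F_\Prb(\mean)$ on $\Prb$ in $W_1$ (Theorem~\ref{thm:dir-der-formula}) yields $\nabla_\sigma F_{\Prb_n}(\mean)\geq \zeta - W_1(\Prb_n,\Prb)$ uniformly in $\sigma$, which together with your appeal to $W_1(\Prb_n,\Prb)\to 0$ a.s.\ already suffices; alternatively you could simply have cited your proven implication $4 \implies 1$. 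Neither deviation affects correctness; the decomposition of the equivalences differs only in bookkeeping from the paper's cycle $1 \implies 2 \implies 4 \implies 1$ with $3 \implies 4$ and $1\wedge 4 \implies 3$.
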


\begin{proof}
"$1 \implies 2$" follows at once from 
$W_1(\Prb, (1-t)\Prb + t \Prbalt) = t W_1(\Prb,
\Prbalt)$ as verified by direct computation. 

"$2 \implies 4$":
Let $\sigma \in \perpdir$, $y \in \bhv$ such that
$\prj_\ort(y)= \mean$ and $\sigma = \dir_\mean(y)$, and let $\Prbalt_t = (1-t)\Prb + t \delta_y$, for $0 \leq t \leq 1$. By 
hypothesis, we find $0 < t_y < 1$ such that $b(\Prbalt_t) \in \ort$ for all $t \leq t_y$.
Since $\mean$ is the Fr\'echet mean of $\Prb$ and 
$\prj_\ort(y) = \mu$, we have 
for any $x \in \ort \setminus \{\mean\}$ that $F_{\Prbalt_t}(x)> F_{\Prbalt_t}(\mean)$.
Thus, $\mean$ is be the Fr\'echet mean of $\Prbalt_t$ for all $t \leq t_y$, 
and hence for all $t \leq t_y$,
\begin{align*}
    0 \leq \nabla_\sigma F_{\Prbalt_t} (\mean) 
        = (1 - t) \cdot \nabla_\sigma F_{\Prb} (\mean) - t \cdot d(\mean, y),
\end{align*}
whence $\nabla_\sigma F_{\Prb} (\mean) \geq \frac{t_y}{1-t_y} d(\mean, y) > 0$.

"$4 \implies 1$": Take arbitrary $\eta >0$ such that $\overline{B_\eta(\mean)} 
\cap \ort$ is closed in $\ort$. As $\perpdir$ is compact and
the directional derivatives are continuous in directions
(Theorem \ref{lemma:lip_angle}), there is a lower bound
\begin{align*}
    0 < \zeta = \min_{\sigma \in \perpdir} \nabla_\sigma F_\Prb 
        \left( \mean \right).
\end{align*}
Then, for any $\Prbalt \in \wst{\bhv}$ with $W_1(\Prb, \Prbalt) < \epsilon: = \min \{ \zeta, \eta\}$, due to Theorem \ref{theorem:contraction}), it  follows that $d(\mean, b(\Prbalt)) < \epsilon \leq \eta.$ By Lemma \ref{lem:ball_in_bhv}, the topology of $\meanalt = b(\Prbalt)$ must feature all splits in the topology of $\mean$. 

It is left to show that $b(\Prbalt) \notin \ort$ cannot be. Otherwise, with $y = 
\prj_{\overline{\ort}}(\meanalt)$, by Theorem \ref{theorem:metric_proj}, we have $d(\mean, y) \leq d(\mean, \meanalt) < \eta$ and hence, $y \in 
\ort$. Furthermore,  $\sigma=\dir_y(\meanalt) \in \perpdir$ and thus by Theorem \ref{lemma:const_der} and Corollary \ref{lemma:const_der},
\begin{align*}
    \lvert \nabla_\sigma F_\Prb(\mean) - \nabla_\sigma F_\Prbalt(y)\rvert \leq
        W_1(\Prb, \Prbalt) < \epsilon \leq \zeta.
\end{align*}
Hence,  $\nabla_\sigma F_\Prbalt(y) > 0$, which implies, following $\sigma$, by strict convexity of the Fr\'echet function, that $F_\Prbalt(y) < F_\Prbalt(\meanalt)$,  
so that $\meanalt$ is not the Fr\'echet mean of $\Prbalt$ .

"$3 \implies 4$":
Since $b(\Prb) = \mean$, we have $\nabla_\sigma F_\Prb(\mean) 
\geq 0$ for all $\sigma \in \perpdir$. We now show that $\nabla_{\sigma^\prime} F_\Prb(\mean) 
= 0$ for some 
$\sigma^\prime \in \perpdir$ yields a contradiction. 
Indeed, then for $(X_i)_{i \in \mathbb{N}} \iid \Prb$, there is $\Prb$-a.s. a random number $N\in \mathbb{N}$ such that $b(\Prb_n) \in \ort$ for all $n \geq N$. Due to Theorem \ref{thm:dir-der-formula}, we also have $\Prb$-a.s.,
\begin{align}
    \label{ineq:nonnegder}
    \nabla_{\sigma^\prime} F_{\Prb_n}(\mean) \geq 0 \quad \forall n \geq N.
\end{align}
Using the notation of Assumption 1, 
consider $S_n = \sum_{i=1}^n \phi_{\sigma^\prime}
(X_i)$, so that $\nabla_{\sigma^\prime} F_{\Prb_n}(\mean) = S_n/n$.
Recalling that we assumed that $\nabla_{\sigma^\prime} F_\Prb(\mean) = \mathbb{E}(\phi(X_i)) = 0$, which implies
that the random walk $S_n$
is recursive (Theorem 5.4.8 in \cite{durrett}), and hence (Exercise 5.4.1 in \cite{durrett}) either
$$ \Prb\{ S_n=0: \mbox{ for all } n\in\NN\} =1\mbox{ or } \Prb\left\{ -\infty = \liminf_{n \in \mathbb{N}} S_n< 
    \limsup_{n \in \mathbb{N}}S_n = \infty\right\} =1\,.$$
The former violates Assumption 1, the latter contradicts (\ref{ineq:nonnegder}), however.

"$1 \mbox{ and }4 \implies 3$": By  \cite[Theorem 6.9]{villani08}, $\Prb_n$ converges against $\Prb$ in $W_1$, hence $\Prb$ is sample sticky. As the directional derivatives of the Fr\'echet function for any $\sigma \in \perpdir$ are non-zero by hypothesis, Assumption 1 holds.

\end{proof}

\section{Application: The Degrees of Stickiness}

\begin{definition}
    Let $\Prb \in \wst{\bhv}$ be a distribution that is Wasserstein sticky on a 
    stratum $\ort$ with positive codimension with $\mean = b(\Prb)$. Then, 
    we call $D_\sigma F_\Prb(\mu)$ the \emph{degree of stickiness of $\Prb$ in 
    direction $\sigma$}.
\end{definition}

We propose to use the degrees of stickiness as a way to discriminate between samples that are sticky on the same stratum. The following example illustrates such an application with two phylogenetic data sets $X,Y$ from \cite{lgc20} with empirical distributions $\Prb^X$ and $\Prb^Y$, where each consists of 63 phylogenetic trees that were inferred from the same genetic data using two different methods. The resulting two Fr\'echet mean trees $\mean_X, \mean_Y$ (after pruning very small splits) coincide in their topologies, as  displayed in Figure \ref{fig:mean}. We test the hypothesis
\begin{align*}
    \mathcal{H}_0 : D_\sigma F_{\Prb^X} (\mean_X) = D_\sigma F_{\Prb^Y} (\mean_Y) \quad 
        \forall \sigma \in \Sigma_{X,Y},
\end{align*}
where we choose $\Sigma_{X,Y}\subset \perpdir$ comprising only directions corresponding to a single split that is present in either $X$ or $Y$ and compatible with the topologies of $\mean_X$ and $\mean_Y$. As there is a natural pairing, we performed a pairwise t-test for each of the directions and applied a Holm-correction, leading to a p-value of 0.0227. This endorses the observation in \cite{lgc20}, that the two methods inferring phylogenetic trees differ significantly on this data set.

\bibliographystyle{splncs04}
\bibliography{references}
\end{document}